\title{DP-colorings of graphs with high chromatic number}
\date{}
\author{
	Anton~Bernshteyn\thanks{Department of Mathematics, University of Illinois at Urbana--Champaign, IL, USA, \href{mailto:bernsht2@illinois.edu}{\texttt{bernsht2@illinois.edu}}. Research of this author is supported by the Illinois Distinguished Fellowship.}
	\and Alexandr Kostochka\thanks{Department of Mathematics, University of Illinois at Urbana--Champaign, IL, USA and
		Sobolev Institute of Mathematics, Novosibirsk 630090, Russia, \href{mailto:kostochk@math.uiuc.edu}{\texttt{kostochk@math.uiuc.edu}}. Research of this author is supported in part by NSF grant
 DMS-1600592 and grants 15-01-05867 and 16-01-00499 of the Russian Foundation for Basic Research.}
\and Xuding Zhu\thanks{Department of Mathematics, Zhejiang Normal University,
	Jinhua, China, \href{mailto:xudingzhu@gmail.com}{\texttt{xudingzhu@gmail.com}}. Research of this author is supported in part by CNSF grant 11571319.}}
\newtheorem{theo}{Theorem}[section]
\newtheorem{corl}[theo]{Corollary}
\newtheorem{claim}{Claim}[theo]
\newtheorem{smallcorl}[claim]{Corollary}
\newcommand*{\myproofname}{Proof}
\newenvironment{claimproof}[1][\myproofname]{\begin{proof}[#1]}{\end{proof}}
\theoremstyle{definition}
\newtheorem{defn}[theo]{Definition}
\newtheorem{remk}[theo]{Remark}
\newcommand{\0}{\varnothing}
\newcommand{\set}[1]{\{#1\}}
\newcommand{\N}{\mathbb{N}}
\newcommand{\Z}{\mathbb{Z}}
\renewcommand{\epsilon}{\varepsilon}
\renewcommand{\phi}{\varphi}
\renewcommand{\theta}{\vartheta}
\newcommand{\powerset}[1]{\operatorname{Pow}(#1)}
\newcommand{\J}{\mathbf{J}}
\numberwithin{equation}{section}
\newcommand{\neutralize}[1]{\expandafter\let\csname c@#1\endcsname\count@}
\begin{document}
	
	\maketitle
	
	\begin{abstract}
		 \emph{DP-coloring} is a generalization of list coloring introduced recently by Dvo\v r\' ak and Postle~\cite{DP}. 
We prove that for every $n$-vertex graph $G$  whose  chromatic number $\chi(G)$ is ``close'' to~$n$, the DP-chromatic number of $G$ equals $\chi(G)$.
 ``Close'' here means $\chi(G)\geq n-O(\sqrt{n})$, and we also show that this lower bound is best possible (up to the constant factor in front of~$\sqrt{n}$), in contrast to the case of list coloring.
	\end{abstract}
	

	\section{Introduction}
	
We use standard notation. In particular, $\N$  denotes the set of all nonnegative integers. For a set~$S$, $\powerset{S}$ denotes the power set of $S$, i.e., the set of all subsets of $S$.
All graphs considered here are finite, undirected, and simple. 
For a graph~$G$, $V(G)$ and $E(G)$ denote the vertex and the edge sets of $G$, respectively.
 For a set $U \subseteq V(G)$, $G[U]$ is the subgraph of $G$ induced by $U$. Let $G - U \coloneqq G[V(G) \setminus U]$, and for $u \in V(G)$, 
let $G - u \coloneqq G - \set{u}$. For  $U_1$, $U_2 \subseteq V(G)$, let $E_G(U_1, U_2) \subseteq 
E(G)$ denote the set of all edges in $G$ with one endpoint in $U_1$ and the other one in $U_2$. 
For $u \in V(G)$, $N_G(u)\subset V(G)$ denotes the set of all neighbors of $u$, and $\deg_G(u) \coloneqq |N_G(u)|$ is the \emph{degree} of~$u$ in $G$. We use $\delta(G)$ to denote the \emph{minimum degree} of $G$, i.e., $\delta(G) \coloneqq \min_{u \in V(G)} \deg_G(u)$. For  $U \subseteq V(G)$, let $N_G(U) \coloneqq \bigcup_{u \in U} N_G(u)$. To simplify notation, we write $N_G(u_1, \ldots, u_k)$ instead of $N_G(\set{u_1, \ldots, u_k})$.
A set $I \subseteq V(G)$ is {\em independent} if $I \cap N_G(I) = \0$, i.e., if $uv \not \in E(G)$ for all $u$, $v \in I$. 
We denote the family of all independent sets in a graph $G$ by $\mathcal{I}(G)$. The complete $k$-vertex graph  is denoted by~$K_k$.
		
	\subsection{The Noel--Reed--Wu Theorem for list coloring}
	
	Recall that a \emph{proper coloring} of a graph $G$ is a function $f \colon V(G) \to Y$, where $Y$ is a set of \emph{colors}, 
such that  $f(u) \neq f(v)$ for every edge $uv \in E(G)$. The smallest $k \in \N$ such that there exists a proper coloring $f \colon V(G) \to Y$ with $|Y| = k$ is called the \emph{chromatic number} of $G$ and is denoted by $\chi(G)$.
	
\emph{List coloring} was introduced independently by Vizing~\cite{Vizing} and Erd\H os, Rubin, and Taylor~\cite{ERT}.
 A \emph{list assignment} for a graph $G$ is a function $L \colon V(G) \to \powerset{Y}$, where $Y$ is a set. For each $u \in V(G)$, the set $L(u)$ is called the \emph{list} of $u$, and its elements are the \emph{colors} \emph{available} for~$u$. A~proper coloring $f \colon V(G) \to Y$ 
is called an \emph{$L$-coloring} if  $f(u) \in L(u)$ for each $u \in V(G)$. 
The \emph{list chromatic number} $\chi_\ell(G)$ of $G$  is the smallest $k \in \N$ such that~$G$ is $L$-colorable for each  list assignment $L$
 with $|L(u)| \geq k$ for all $u \in V(G)$. 
 It is an immediate consequence of the definition that $\chi_\ell(G) \geq \chi(G)$ for every graph $G$.
 
 It is well-known (see, e.g.,~\cite{ERT, Vizing}) that the list chromatic number of a graph can significantly exceed its ordinary chromatic number. Moreover, there exist $2$-colorable graphs with arbitrarily large list chromatic numbers. On the other hand, Noel, Reed, and Wu~\cite{NRW}
established the following result, which was conjectured by Ohba~\cite[Conjecture~1.3]{Ohba}:

	\begin{theo}[Noel--Reed--Wu~\cite{NRW}]\label{theo:NRW}
Let $G$ be an $n$-vertex graph with $\chi(G)\geq (n-1)/2$. Then $\chi_\ell(G)=\chi(G)$.
	\end{theo}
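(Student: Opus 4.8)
The plan is to argue by contradiction. Choose a counterexample $(G,L)$ --- so $\chi(G)=k$, $|L(v)|\ge k$ for every $v$, and $G$ is not $L$-colorable --- with $n\coloneqq|V(G)|$ as small as possible and, subject to that, with $|E(G)|$ as large as possible; the hypothesis $\chi(G)\ge(n-1)/2$ says precisely that $n\le 2k+1$. Two routine reductions come first. Discarding surplus colors, we may assume $|L(v)|=k$ for every $v$. Next, fix an optimal proper coloring $V_1,\dots,V_k$ of $G$ and consider the complete multipartite graph $H$ with parts $V_1,\dots,V_k$: it has the same vertex set as $G$, the same chromatic number $k$, at least as many edges, and, being a supergraph of $G$, it is still not $L$-colorable. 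By the edge-maximality of $G$ this forces $G=H$, so from now on $G=K_{V_1,\dots,V_k}$ is complete multipartite, the parts $V_1,\dots,V_k$ are exactly its maximal independent sets, and every independent set of $G$ is contained in a single part.

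The engine of the argument is one reduction move. Suppose some part $V_i$ with $|V_i|\ge 2$ has a color common to all its lists, say $c\in\bigcap_{v\in V_i}L(v)$. Color every vertex of $V_i$ with $c$ and delete $c$ from every remaining list; the resulting assignment $L'$ on $G-V_i=K_{(V_j)_{j\ne i}}$ has lists of size $\ge k-1=\chi(G-V_i)$ and satisfies $|V(G-V_i)|=n-|V_i|\le 2k-1=2(k-1)+1$, so by the minimality of $(G,L)$ the graph $G-V_i$ is $L'$-colorable, and this extends the coloring of $V_i$ to an $L$-coloring of $G$ --- a contradiction. Since distinct parts are completely joined, a singleton part cannot be enlarged to a larger independent set, so this is the only shape the move can take; we may therefore assume henceforth that \emph{every part $V_i$ with $|V_i|\ge 2$ satisfies $\bigcap_{v\in V_i}L(v)=\0$}.

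It remains to derive a contradiction from the data: $G=K_{V_1,\dots,V_k}$ is not $L$-colorable, $|L(v)|=k$, $n\le 2k+1$, and every part of size $\ge 2$ has empty list-intersection. Since adjacency in $G$ is exactly ``lying in distinct parts'', an $L$-coloring is the same thing as a choice of pairwise disjoint color sets $C_1,\dots,C_k$ with $C_i\cap L(v)\ne\0$ for every $v\in V_i$; a singleton part can be served by any single color from its list, while a part of size $\ge 2$ forces $|C_i|\ge 2$ precisely because its lists have empty common intersection. Nonexistence of such $C_i$'s is, by the deficiency form of Hall's theorem, witnessed by a set $S$ of parts for which $\bigl|\bigcup_{i\in S}\bigcup_{v\in V_i}L(v)\bigr|$ is too small relative to the combined $C_i$-requirements, and I would combine this deficiency with the budget $n\le 2k+1$ and the count $\sum_v|L(v)|=kn$ --- using a second matching argument to extract from such an $S$ an independent ``rainbow'' transversal that can be colored and deleted, again contradicting minimality. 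The step I expect to be the real obstacle is that the requirement on a part $V_i$ depends not only on $|V_i|$ but on how heavily its lists overlap one another and the lists of the other parts, so the inequality does not close in one step: the argument must branch into a delicate case analysis on the profile of the part sizes (how many parts have size $1$, how many have size $2$, and whether any has size $\ge 3$) and on the interactions among the corresponding unions of lists. That case analysis is essentially the whole content of Noel--Reed--Wu, and I see no way around it.
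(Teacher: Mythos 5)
This theorem is not proved in the paper you were given: it is quoted verbatim from Noel, Reed, and Wu~\cite{NRW} and used as a black box, so there is no in-paper proof to measure your attempt against. Judged on its own terms, your write-up contains only the standard preliminary reductions and then stops before the theorem actually begins. The reductions themselves are fine: passing to a counterexample minimizing $n$ and then maximizing $|E(G)|$, normalizing $|L(v)|=k$, replacing $G$ by the complete multipartite graph on the color classes of an optimal coloring (a supergraph that is still not $L$-colorable, so edge-maximality forces equality), and the collapse move that colors an entire part with a common list color and deletes it --- the arithmetic $n-|V_i|\le 2k-1=2(k-1)+1$ does keep the induction hypothesis valid. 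All of this is correct and is indeed how work on Ohba's conjecture traditionally starts.

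The genuine gap is everything after that. Your final paragraph is a statement of intent, not an argument: you reformulate $L$-colorability of a complete multipartite graph as finding disjoint sets $C_1,\dots,C_k$ meeting every list of the corresponding part (correct), invoke the deficiency form of Hall's theorem to say non-colorability is ``witnessed'' by some family of parts, and then say you ``would combine this deficiency with the budget $n\le 2k+1$'' via ``a second matching argument'' and ``a delicate case analysis.'' No such combination is carried out, and as you yourself observe, the inequality does not close in one step because the demand of a part depends on how its lists overlap, not just on its size. You then concede that ``that case analysis is essentially the whole content of Noel--Reed--Wu, and I see no way around it.'' That concession is accurate: the missing step is not a routine verification but the entire substance of the theorem, whose published proof is a long and intricate induction (it occupies most of a substantial paper in \emph{J.\ Graph Theory}). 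As submitted, this is a correct setup followed by an explicit admission that the proof is absent, so it cannot be accepted as a proof of the statement.
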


	The following construction was first studied by Ohba~\cite{Ohba} and Enomoto, Ohba, Ota, and Sakamoto~\cite{EOOS}. For a graph $G$ and $s \in \N$, let $\J(G,s)$ denote the {\em join} of $G$ and a copy of $K_s$, i.e., the graph obtained from $G$ by adding $s$ new vertices that are adjacent to every vertex in $V(G)$ and to each other. It is clear from the definition that for all $G$ and $s$, $\chi(\J(G,s))=\chi(G)+s$. Moreover, we have $\chi_\ell(\J(G, s)) \leq \chi_\ell(G) + s$; however, this inequality can be strict. Indeed, Theorem~\ref{theo:NRW} implies that for every graph $G$ and every $s \geq |V(G)| - 2 \chi(G) - 1$,
	$$
		\chi_\ell(\J(G, s)) = \chi(\J(G, s)),
	$$
	even if $\chi_\ell(G)$ is much larger than $\chi(G)$. In view of this observation, it is interesting to consider the following parameter:
	\begin{equation}\label{eq:list_Zhu}
		Z_\ell(G) \coloneqq \min \set{s \in \N \,:\, \chi_\ell(\J(G, s)) = \chi(\J(G, s))},
	\end{equation}
	i.e., the smallest $s \in \N$ such that the list and the ordinary chromatic numbers of $\J(G, s)$ coincide. The parameter $Z_\ell(G)$ was explicitly defined by Enomoto, Ohba, Ota, and Sakamoto in~\cite[page~65]{EOOS} (they denoted it $\psi(G)$). Recently, Kim, Park, and Zhu (personal communication, 2016) obtained new lower bounds on $Z_\ell(K_{2,n})$, $Z_\ell(K_{n,n})$, and $Z_\ell(K_{n,n,n})$. One can also consider, for $n \in \N$,
\begin{equation}\label{eq:list_Zhu_n}
	Z_\ell(n) \coloneqq \max \set{Z_\ell(G) \,:\, |V(G)| = n}.
\end{equation}
The parameter $Z_\ell(n)$ is closely related to the Noel--Reed--Wu Theorem, since, by definition, there exists a graph $G$ on $n + Z_\ell(n) - 1$ vertices whose ordinary chromatic number is at least $Z_\ell(n)$ and whose list and ordinary chromatic numbers are distinct. The finiteness of $Z_\ell(n)$ for all $n \in \N$ was first established by Ohba~\cite[Theorem~1.3]{Ohba}. Theorem~\ref{theo:NRW} yields an upper bound $Z_\ell(n) \leq n - 5$ for all $n \geq 5$; on the other hand, a result of Enomoto, Ohba, Ota, and Sakamoto~\cite[Proposition~6]{EOOS} implies that $Z_\ell(n) \geq n - O(\sqrt{n})$.

\subsection{DP-colorings and the results of this paper}

The goal of this note is to study  analogs of $Z_\ell(G)$ and $Z_\ell(n)$ for the generalization of list coloring that was recently introduced by Dvo\v r\' ak and Postle~\cite{DP}, which we call \emph{DP-coloring}. 
 Dvo\v r\' ak and Postle invented DP-coloring to attack an open problem on list coloring of planar graphs with no cycles of certain lengths. 
	
	\begin{defn}
		Let $G$ be a graph. A \emph{cover} of $G$ is a pair $(L, H)$, where $H$ is a graph and $L \colon V(G) \to \powerset{V(H)}$ is a function, with the following properties:
		\begin{itemize}
			\item[--] the sets $L(u)$, $u \in V(G)$, form a partition of $V(H)$;
			\item[--] if $u$, $v \in V(G)$ and $L(v) \cap N_H(L(u)) \neq \0$, then $v \in \set{u} \cup N_G(u)$; 
			\item[--] each of the graphs $H[L(u)]$, $u \in V(G)$, is complete;
			\item[--] if $uv \in E(G)$, then $E_H(L(u), L(v))$ is a matching (not necessarily perfect and possibly empty).
		\end{itemize}
	\end{defn}
	
	\begin{defn}
		Let $G$ be a graph and let $(L, H)$ be a cover of $G$. An \emph{$(L, H)$-coloring} of $G$ is an independent set $I \in \mathcal{I}(H)$ of size $|V(G)|$. Equivalently, $I \in \mathcal{I}(H)$ is an $(L, H)$-coloring of $G$ if $|I \cap L(u)| = 1$ for all $u \in V(G)$.
	\end{defn}
	
	\begin{remk}
		Suppose that $G$ is a graph, $(L, H)$ is a cover of $G$, and $G'$ is a subgraph of $G$. In such situations, we will allow a slight abuse of terminology and speak of $(L, H)$-colorings of $G'$ (even though, strictly speaking, $(L, H)$ is not a cover of $G'$).
	\end{remk}
	
	The \emph{DP-chromatic number} $\chi_{DP}(G)$ of a graph $G$  is the smallest $k \in \N$ such that $G$ is $(L,H)$-colorable for each cover $(L, H)$ with $|L(u)| \geq k$ for all $u \in V(G)$.
	
	To show that DP-colorings indeed generalize list colorings, consider a graph $G$ and a list assignment $L$ for $G$. Define a graph $H$ as follows: Let $V(H) \coloneqq \set{(u, c)\,:\, u \in V(G) \text{ and } c \in L(u)}$ and let
	$$
		(u_1, c_1)(u_2, c_2) \in E(H) \,\vcentcolon\Longleftrightarrow\, (u_1 = u_2 \text{ and } c_1 \neq c_2) \text{ or } (u_1u_2 \in E(G) \text{ and }c_1 = c_2).
	$$
	For $u \in V(G)$, let $\hat{L}(u) \coloneqq \set{(u, c)\,:\, c \in L(u)}$. Then $(\hat{L}, H)$ is a cover of $G$, and there is a one-to-one correspondence between $L$-colorings and $(\hat{L}, H)$-colorings of $G$. Indeed, if $f$ is an $L$-coloring of $G$, then the set $I_f \coloneqq \set{(u, f(u))\,:\, u \in V(G)}$ is an $(\hat{L}, H)$-coloring of~$G$. Conversely, given an $(\hat{L}, H)$-coloring $I$ of $G$, we can define an $L$-coloring $f_I$ of $G$ by the property $(u, f_I(u)) \in I$ for all $u \in V(G)$. Thus, list colorings form a subclass of DP-colorings. In particular, $\chi_{DP}(G) \geq \chi_\ell(G)$ for each graph $G$.
	
	Some upper bounds on list-chromatic numbers hold for DP-chromatic numbers as well. For example,  $\chi_{DP}(G) \leq d+1$ for any $d$-degenerate graph $G$. Dvo\v r\'ak and Postle~\cite{DP} pointed out that Thomassen's bounds~\cite{Thomassen1, Thomassen2}
on the list chromatic numbers of planar graphs hold also for their DP-chromatic numbers; in particular,
 $\chi_{DP}(G) \leq 5$  for every planar graph $G$.
  On the other hand, there are also some striking differences between DP- and list coloring. For instance, even cycles are $2$-list-colorable, while
  their DP-chromatic number is $3$; in particular, the orientation theorems of Alon--Tarsi~\cite{AT} and the Bondy--Boppana--Siegel Lemma (see~\cite{AT}) do not extend to DP-coloring (see~\cite{BK} for further examples of differences between list and DP-coloring).
  
	By analogy with~\eqref{eq:list_Zhu} and \eqref{eq:list_Zhu_n}, we consider the parameters
	$$
		Z_{DP}(G) \coloneqq \min \set{s \in \N \,:\, \chi_{DP}(\J(G, s)) = \chi(\J(G, s))},
	$$
	and
	$$
	Z_{DP}(n) \coloneqq \max \set{Z_{DP}(G) \,:\, |V(G)| = n}.
	$$
	Our main result asserts that for all graphs $G$, $Z_{DP}(G)$ is finite:
  	
	\begin{theo}\label{theo:main}
	Let $G$ be a graph with $n$ vertices, $m$ edges, and chromatic number $k$. Then $Z_{DP}(G) \leq 3m$. Moreover, if $\delta(G) \geq k-1$, then $$Z_{DP}(G) \leq 3m - \frac{3}{2}(k-1)n.$$
	\end{theo}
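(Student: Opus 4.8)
The plan is to prove the ``moreover'' inequality directly and to deduce $Z_{DP}(G)\le 3m$ from it by an easy induction. Write $k=\chi(G)$, $n=|V(G)|$, and denote by $W$ the clique of $s$ new vertices of $\J(G,s)$. I will use repeatedly that $\J(G,s+t)=\J(\J(G,s),t)$ and that $\chi_{DP}(\J(G',t))\le\chi_{DP}(G')+t$ for every $G'$ (colour the copy of $K_t$ greedily, then colour $G'$); consequently, once $\chi_{DP}(\J(G,s_0))=k+s_0$ for a single $s_0$, the same holds for every $s\ge s_0$. So in all cases it is enough to produce an $(L,H)$-colouring of $\J(G,s)$ for the claimed value of $s$ and an arbitrary cover $(L,H)$, and we may assume $|L(v)|=k+s$ for every $v$.

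For the bound $Z_{DP}(G)\le 3m$, induct on $|V(G)|+|E(G)|$. If $G$ has a vertex $v$ with $\deg_G(v)\le k-1$, then $\chi(G-v)\le k$, so by induction $\chi_{DP}(\J(G-v,s'))=\chi(G-v)+s'$ for every $s'\ge 3|E(G-v)|=3(m-\deg_G(v))$; in particular $\chi_{DP}(\J(G-v,3m))\le k+3m$. Given a cover of $\J(G,3m)$ with lists of size $k+3m$, first colour $\J(G,3m)-v=\J(G-v,3m)$ and then colour $v$: this succeeds since $|L(v)|\ge k+3m>\deg_G(v)+3m=\deg_{\J(G,3m)}(v)$. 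If, on the other hand, $\delta(G)\ge k$, then in particular $\delta(G)\ge k-1$ and the ``moreover'' bound yields $Z_{DP}(G)\le 3m-\tfrac{3}{2}(k-1)n\le 3m$. This reduces everything to the case $\delta(G)\ge k-1$.

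So assume $\delta(G)\ge k-1$, fix a proper $k$-colouring of $G$ with colour classes $V_1,\dots,V_k$, and set $r(v):=\deg_G(v)-(k-1)\ge 0$, $R:=\sum_{v\in V(G)}r(v)=2m-(k-1)n$, and $s:=3m-\tfrac{3}{2}(k-1)n=\tfrac{3}{2}R$ (rounded down if $R$ is odd). I would build the colouring in two stages. In Stage~1, colour the clique $W$ by a cover-colouring $Y=\{y_w:w\in W\}$ that \emph{reserves} $r(v)$ vertices in every list, meaning $|L(v)\cap N_H(Y)|\le s-r(v)$ for all $v\in V(G)$. In Stage~2, colour $V_1,\dots,V_k$ in this order, giving each $u\in V_i$ a vertex of $L(u)$ that lies outside $N_H(Y)$ and outside the set of vertices of $L(u)$ matched, along edges of $G$, to the colours already chosen at neighbours of $u$ in $V_1\cup\cdots\cup V_{i-1}$; the number of forbidden vertices is at most $(s-r(u))+\deg_G(u)=k+s-1<|L(u)|$, so a colour is always available, and the union of the chosen colours is an $(L,H)$-colouring of $\J(G,s)$.

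The crux, and the step I expect to be the main obstacle, is Stage~1: producing a cover-colouring of $W$ reserving $r(v)$ vertices in each $L(v)$ using only $|W|=s\approx\tfrac{3}{2}R$ clique vertices. The plan is a greedy construction over $W=\{w_1,\dots,w_s\}$: having chosen $y_{w_1},\dots,y_{w_{j-1}}$, pick $y_{w_j}$ among the $\ge k+s-(j-1)\ge k+1$ vertices of $L(w_j)$ non-adjacent to all of them. Such a choice leaves $|L(v)\cap N_H(Y)|$ unchanged — hence reserves one owed vertex of $L(v)$ — whenever $y_{w_j}\notin N_H(L(v))$, or $y_{w_j}$ is matched through the edge $w_jv$ to a vertex of $L(v)$ already hit by one of $y_{w_1},\dots,y_{w_{j-1}}$. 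Estimating at each step how many admissible choices of $y_{w_j}$ fail to reserve a vertex of any still under-reserved list should show that meeting all the quotas costs at most $\tfrac{3}{2}R$ clique vertices: early clique vertices may reserve nothing, but once a list has been hit a few times essentially every admissible choice reserves a further vertex of it, and a charging argument hides the early wastage in the factor $\tfrac{3}{2}$. Nailing down this constant — and separately handling covers that behave on $W$ like a list assignment, for which the greedy colouring of $W$ cannot be made ``efficient'' but the ordinary proper $(k+s)$-colouring of $\J(G,s)$ transfers to the cover directly — is the delicate part.
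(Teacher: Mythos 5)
Your reductions are fine: the monotonicity of $s\mapsto \chi_{DP}(\J(G,s))-\chi(\J(G,s))$, the restriction to lists of size exactly $k+s$, and the degeneracy induction that removes a vertex of degree at most $k-1$ and colours it last are all correct, and Stage~2 (colouring $V_1,\dots,V_k$ greedily once the quotas $r(v)$ are reserved) is a valid count. But the entire content of the theorem sits in your Stage~1, and that step is not proved --- and, as stated, it is actually \emph{false}. If the cover restricted to $\set{w}\cup A$ for $w\in V(G)$ comes from a list assignment in which all lists are equal (e.g.\ $H$ is built from $V(\J(G,s))\times\set{1,\dots,k+s}$ in the canonical way), then every choice of an independent transversal $Y$ of the clique part satisfies $|L(v)\cap N_H(Y)|=s$ exactly for every $v$, so not a single vertex can be reserved in any list, and your quota $r(v)=\deg_G(v)-(k-1)>0$ is unattainable whenever $G$ is not $(k-1)$-degenerate-tight (already for $G=C_4$ with $r(v)=1$). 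You acknowledge this and propose to ``separately handle covers that behave on $W$ like a list assignment,'' but this is not a clean dichotomy: a cover can be list-like on some pairs and not on others, and interpolating between the two regimes --- deciding exactly when a failure to reserve can be traded for a proper-colouring-style argument on $G$ --- is precisely the difficulty of the theorem, not a boundary case. Likewise, the ``charging argument that hides the early wastage in the factor $\tfrac32$'' is asserted, not given; nothing in the proposal explains why $\tfrac32$ (rather than $2$, or $10$, or no constant at all) is achievable.

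For comparison, the paper does not colour $A$ first and $G$ second. It proves a stronger, deficiency-weighted statement (Theorem~2.1: $|A|\ge\frac32\sum_v\max\set{\deg_G(v)+|A|-|L(v)|+1,0}$ suffices) by a minimal counterexample argument. The factor $\tfrac32$ arises because in a minimal counterexample one can always find a walk $x_0-y_0-x_1-y_1-x_2$ in $H$ with $x_0,x_1,x_2$ an independent set inside the $A$-side lists and $y_0,y_1$ on the $G$-side: deleting $x_0,x_1,x_2$ (and three elements of $A$) costs $3$ units of $|A|$ but decreases the total deficiency $\sigma$ by $2$, because $y_0$ and $y_1$ each lose two list-neighbours at once. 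The existence of this configuration is exactly where the $k$-colourability of $G$ enters (via two disjoint independent sets $U_0,U_1$ with $\chi(G-U_i)=k-1$), and it is also exactly what distinguishes genuine DP-covers from list-assignment covers, for which such an independent triple cannot be exploited. Your proposal never locates a structure of this kind, so the gap is not a missing computation but a missing idea.
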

	
	\begin{corl}\label{corl:Z_n}
		For all $n \in \N$, $Z_{DP}(n) \leq 3n^2/2$.
	\end{corl}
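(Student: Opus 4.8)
The plan is to read the statement off directly from Theorem~\ref{theo:main}. Fix an arbitrary graph $G$ on $n$ vertices, and let $m$ be its number of edges and $k \coloneqq \chi(G)$. Theorem~\ref{theo:main} gives $Z_{DP}(G) \le 3m$ with no hypothesis on $G$. Since $G$ is simple, $m \le \binom{n}{2} = \frac{1}{2}n(n-1)$, and therefore
\[
Z_{DP}(G) \le 3m \le \frac{3}{2}\,n(n-1) \le \frac{3}{2}\,n^2 .
\]
As $G$ ranges over all $n$-vertex graphs, the quantity $Z_{DP}(G)$ ranges over exactly the values whose supremum defines $Z_{DP}(n)$, so taking the maximum over all such $G$ yields $Z_{DP}(n) \le 3n^2/2$, which is the claimed bound.

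One could, if a sharper constant were wanted, split into cases according to whether $\delta(G) \ge k-1$: for dense $G$ (those whose edge count is close to $\binom{n}{2}$, which forces both $k$ and $\delta(G)$ to be large) the ``moreover'' clause of Theorem~\ref{theo:main} replaces the bound $3m$ by $3m - \frac{3}{2}(k-1)n$, while for sparse $G$ the quantity $3m$ is already small; balancing these would give a somewhat smaller leading constant. For the corollary exactly as stated, however, this refinement is unnecessary, and the one-line computation above is complete. Accordingly, there is essentially no obstacle here — all of the content lives in Theorem~\ref{theo:main}, and Corollary~\ref{corl:Z_n} is just the arithmetic remark that $3\binom{n}{2} \le \frac{3}{2}n^2$.
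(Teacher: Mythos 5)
Your proof is correct and follows exactly the paper's route: the authors also derive the corollary by combining the bound $Z_{DP}(G)\leq 3m$ from Theorem~\ref{theo:main} with the fact that an $n$-vertex simple graph has at most $\binom{n}{2}\leq n^2/2$ edges. No further comment is needed.
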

	
	Note that the upper bound on $Z_{DP}(n)$ given by Corollary~\ref{corl:Z_n} is quadratic in $n$, in contrast to the linear upper bound on $Z_\ell(n)$ implied by Theorem~\ref{theo:NRW}. Our second result shows that the order of magnitude of $Z_{DP}(n)$ is indeed quadratic:

	\begin{theo}\label{theo:lower}
			For all $n \in \N$, $Z_{DP}(n) \geq n^2/4 - O(n)$.	
	\end{theo}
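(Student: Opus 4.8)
\emph{Overall strategy.} I would fix $G = K_{\lfloor n/2\rfloor,\lceil n/2\rceil}$, so that $\chi(G)=2$ and $\chi(\J(G,s)) = s+2$, and show that for every $s \le \lfloor n^2/4\rfloor - n - 1$ there is a cover $(L,H)$ of $\J(G,s)$ with all lists of size $s+2$ that admits no $(L,H)$-coloring. This gives $\chi_{DP}(\J(G,s)) > \chi(\J(G,s))$ for every such $s$, hence $Z_{DP}(n) \ge Z_{DP}(G) \ge \lfloor n^2/4\rfloor - n = n^2/4 - O(n)$ (for small $n$ the asserted bound is nonpositive, so there is nothing to prove). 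Write $X = \{x_1,\dots,x_a\}$, $Y = \{y_1,\dots,y_b\}$ for the two sides of $G$, where $a = \lfloor n/2\rfloor$, $b = \lceil n/2\rceil$ and $ab = \lfloor n^2/4\rfloor$, and let $C = \{c_1,\dots,c_s\}$ be the universal clique added in $\J(G,s)$; put $m := s+2$ and $[m] := \{1,\dots,m\}$.

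\emph{The cover.} Take $V(H) = \bigsqcup_v \{v\}\times[m]$ with $L(v) = \{v\}\times[m]$, make each $L(v)$ a clique of $H$, and across each edge $uv \in E(\J(G,s))$ install the perfect matching $(u,i)\sim(v,\pi_{uv}(i))$ for a permutation $\pi_{uv}$ of $[m]$; then an $(L,H)$-coloring is precisely a map $g$ with $g(v)\in[m]$ and $g(v)\ne\pi_{uv}(g(u))$ for every edge $uv$. Let $\pi_{uv} = \id$ for \emph{every} edge except the ``special'' edges $x_iy_j$ with $2\le i\le a$ and $2\le j\le b$; there are $(a-1)(b-1) = \lfloor n^2/4\rfloor - n + 1$ of them, which by the assumption $s \le \lfloor n^2/4\rfloor - n - 1$ is at least $s+2 = m \ge \chi'(K_m)$. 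Fix a proper edge-coloring of $K_m$ with classes $M_1,\dots,M_q$, $q = \chi'(K_m)\le m$, and let $\iota_\ell$ be the involution of $[m]$ that swaps the two ends of each edge of $M_\ell$ and fixes every other element. Assign $\iota_1,\dots,\iota_q$ to $q$ distinct special edges and $\id$ to the remaining ones.

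\emph{Why there is no $(L,H)$-coloring.} Suppose $g$ were one. Since $C$ is a clique with all internal matchings equal to $\id$, the map $g$ is injective on $C$, so $S := g(C)$ has size $s$ and $\{\alpha,\beta\} := [m]\setminus S$ has size $2$; and since every edge from $C$ to $X\cup Y$ carries $\id$, we get $g(v)\notin S$, i.e.\ $g(v)\in\{\alpha,\beta\}$, for all $v\in X\cup Y$. The pair $\{\alpha,\beta\}$ lies in exactly one class $M_\ell$, so $\iota_\ell$ is carried by some special edge $x_py_q$ with $p,q\ge 2$, and on $\{\alpha,\beta\}$ the permutation $\iota_\ell$ is the transposition; hence the constraint of $x_py_q$ forces $g(x_p) = g(y_q)$. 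Along the $4$-cycle $x_p\,y_q\,x_1\,y_1$ of $G$, the three remaining edges $y_qx_1$, $x_1y_1$, $y_1x_p$ carry $\id$ and therefore force distinct colors at their ends; together with $g(x_p)=g(y_q)$ these are exactly the constraints of the standard uncolorable cover of $C_4$ with lists of size $2$ (three ``identity'' edges and one ``swap''), so they cannot all be met by a $\{\alpha,\beta\}$-valued $g$ — a contradiction.

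\emph{The main obstacle.} The only non-routine ingredient, and the reason the bound is quadratic rather than linear, is efficiency in the number of special edges: a valid coloring of $\J(G,s)$ would leave one of $\binom m2$ possible residual pairs $\{\alpha,\beta\}$ on $X\cup Y$, and the obvious construction — a separate $C_4$-gadget for each pair — would require $\Theta(m^2)$ special edges, while $G$ supplies only $(a-1)(b-1)\approx n^2/4$. The point is that one special edge carrying $\iota_\ell$ kills \emph{all} $|M_\ell|\approx m/2$ residual pairs of the color class $M_\ell$ at once, so $\chi'(K_m)\le m$ special edges suffice; matching $m = s+2$ against $(a-1)(b-1)\approx n^2/4$ is precisely what produces $Z_{DP}(n)\ge n^2/4-O(n)$. (Two small points to check carefully in the write-up: that the construction is valid for \emph{every} $s$ in the stated range, which is automatic since the inequality $s+2\le(a-1)(b-1)$ is monotone in $s$; and that $\chi'(K_m)\le m$ with the classes genuinely partitioning $E(K_m)$, so that each pair $\{\alpha,\beta\}$ is hit by exactly one $\iota_\ell$.)
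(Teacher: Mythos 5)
Your proof is correct, and its overall skeleton coincides with the paper's (see Theorem~\ref{theo:lower2}): take $G=K_{\lfloor n/2\rfloor,\lceil n/2\rceil}$, join it with a clique of size roughly $n^2/4-n$, give every vertex a list of size two more than the clique, and use identity matchings inside the clique and from the clique to $X\cup Y$, so that any coloring pins $X\cup Y$ down to two residual colors $\set{\alpha,\beta}$; in both arguments the quadratic bound comes from the fact that the $\approx n^2/4$ edges of $G$ are enough to defeat all residual pairs. Where you genuinely diverge is the gadget accomplishing that last step. The paper indexes colors by $\Z_k\times\Z_k$ and places the shift $(i,j)\mapsto(i+s,j+t)$ on the edge $x_sy_t$; two distinguished vertices $x,y$ joined to everything by the identity force all of $X$ to one residual value and all of $Y$ to the other, and the edge whose shift equals the difference of these two values yields the conflict, each nontrivial shift killing an entire difference class of ordered pairs. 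You instead take a proper edge coloring ($1$-factorization) of $K_m$, put each of the $\chi'(K_m)\le m$ involutions $\iota_\ell$ on one special edge of $G$ so that it kills the whole color class $M_\ell$ of unordered residual pairs at once, and then use a $C_4$ gadget (three identity edges through $x_1,y_1$ plus the one swap) to turn the forced equality at the special edge into a contradiction. Your version has two minor advantages: it works verbatim for every $s$ up to the threshold, so you never need the (true but unstated) monotonicity of the condition $\chi_{DP}(\J(G,s))=\chi(\J(G,s))$ in $s$ on which the paper's single-value construction implicitly relies; and it uses only $O(m)$ non-identity matchings rather than an algebraic structure on the whole color set. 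The constants are essentially identical ($\lfloor n^2/4\rfloor-n$ versus $n^2/4-n$), so both arguments give $Z_{DP}(n)\ge n^2/4-O(n)$.
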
	
	
	Corollary~\ref{corl:Z_n} and Theorem~\ref{theo:lower} also yield the following analog of Theorem~\ref{theo:NRW} for DP-coloring:
	
	\begin{corl}\label{corl:NRW_DP}
		For $n \in \N$, let $r(n)$ denote the minimum $r \in \N$ such that for every $n$-vertex graph~$G$ with $\chi(G)\geq r$, we have $\chi_{DP}(G)=\chi(G)$.
		Then
		$$
			n - r(n) = \Theta(\sqrt{n}).
		$$
	\end{corl}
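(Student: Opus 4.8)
The plan is to derive the statement from the two estimates already established, $Z_{DP}(n)\le 3n^2/2$ (Corollary~\ref{corl:Z_n}) and $Z_{DP}(n)\ge n^2/4-O(n)$ (Theorem~\ref{theo:lower}), via an elementary reformulation of $r(n)$. Put $d(G)\coloneqq |V(G)|-\chi(G)$, and note that $d(\J(G,s))=d(G)$ for every $s$. Unwinding the definition, $r(n)$ equals $1$ plus the largest chromatic number of an $n$-vertex graph $G$ with $\chi_{DP}(G)>\chi(G)$; rearranging, for every $n$ for which such a graph exists (certainly for all $n\ge 4$),
\[
	n-r(n)\;=\;\min\set{d(G)\,:\,|V(G)|=n\text{ and }\chi_{DP}(G)>\chi(G)}-1,
\]
so it is enough to show that this minimum is $\Theta(\sqrt n)$. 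I will also use that $\chi_{DP}$ is monotone under subgraphs, that $\chi_{DP}(\J(H,1))\le\chi_{DP}(H)+1$ (colour the universal vertex first; this removes at most one colour from each remaining list), and hence that $\chi_{DP}(\J(H,s))=\chi(\J(H,s))$ whenever $s\ge Z_{DP}(H)$.

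\emph{Lower bound ($n-r(n)=\Omega(\sqrt n)$).} I claim that $d(G)<\tfrac13\sqrt n$ forces $\chi_{DP}(G)=\chi(G)$. Fix such a $G$, put $d\coloneqq n-\chi(G)$, take an optimal colouring, and let $S$ be the union of its singleton classes and $B\coloneqq V(G)\setminus S$. If $\set u$ and $\set v$ were singleton classes with $uv\notin E(G)$, merging them would give a proper $(\chi(G)-1)$-colouring; hence $S$ is a clique, a vertex count gives $|S|\ge\chi(G)-d=n-2d$, so $|B|\le 2d$, and moreover $\chi(G[B])\le\chi(G)-|S|$ because the $\chi(G)-|S|$ non-singleton classes partition $B$ into independent sets. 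Identifying $S$ with the $|S|$ universal vertices of $\J(G[B],|S|)$, every edge of $G$ appears there, so $G\subseteq\J(G[B],|S|)$ and thus $\chi_{DP}(G)\le\chi_{DP}(\J(G[B],|S|))$. By Theorem~\ref{theo:main},
\[
	Z_{DP}(G[B])\;\le\;3\,|E(G[B])|\;\le\;3\binom{|B|}{2}\;\le\;3\binom{2d}{2}\;\le\;6d^2\;\le\;n-2d\;\le\;|S|,
\]
the step $6d^2\le n-2d$ using $d<\tfrac13\sqrt n$ and $n\ge 4$. Since $|S|\ge Z_{DP}(G[B])$, we get $\chi_{DP}(\J(G[B],|S|))=\chi(\J(G[B],|S|))=\chi(G[B])+|S|\le\chi(G)$, whence $\chi_{DP}(G)=\chi(G)$.

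\emph{Upper bound ($n-r(n)=O(\sqrt n)$).} I must produce, for every large $n$, an $n$-vertex graph $G$ with $\chi_{DP}(G)>\chi(G)$ and $d(G)=O(\sqrt n)$. By Theorem~\ref{theo:lower}, for each $n_0$ there is a graph $G_0$ on $n_0$ vertices with $Z_{DP}(G_0)\ge n_0^2/4-O(n_0)$; then for $0\le s<Z_{DP}(G_0)$ the graph $\J(G_0,s)$ has $n_0+s$ vertices, satisfies $\chi_{DP}>\chi$, and has $d(\J(G_0,s))=d(G_0)\le n_0-1$. Given $n$, take $n_0\coloneqq\lceil 3\sqrt n\,\rceil$; since $n_0^2/4\ge\tfrac94 n$, we have $n_0+Z_{DP}(G_0)-1\ge\tfrac94 n-O(\sqrt n)$, which for $n$ large exceeds both $n$ and $n_0$. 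Hence $n$ lies in $[n_0,\,n_0+Z_{DP}(G_0)-1]$, so $G\coloneqq\J(G_0,n-n_0)$ satisfies $\chi_{DP}(G)>\chi(G)$ and $d(G)\le n_0-1=O(\sqrt n)$. Together with the previous paragraph this gives $n-r(n)=\Theta(\sqrt n)$.

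The conceptual crux — and the only step where something could really go wrong — is the observation that the singleton classes of an optimal colouring form a clique, which is precisely what makes the containment $G\subseteq\J(G[B],|S|)$ possible and thereby reduces the whole problem to the two bounds on $Z_{DP}$; granting that, what remains is routine, the only care being needed in the reformulation of $r(n)$ and in choosing the base graph of size $\approx 3\sqrt n$ so that the construction in the upper bound reaches every sufficiently large $n$.
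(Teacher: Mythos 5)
Your proof is correct and follows essentially the same route as the paper: for the $\Omega(\sqrt n)$ direction you reduce a graph with small defect $d(G)=n-\chi(G)$ to a join of a small graph with a large clique (via the singleton colour classes forming a clique, where the paper instead completes $G$ to a complete multipartite graph) and apply the quadratic upper bound on $Z_{DP}$; for the $O(\sqrt n)$ direction you join the extremal graph from Theorem~\ref{theo:lower} on $\Theta(\sqrt n)$ vertices with a clique to reach $n$ vertices, exactly as the paper does. The only additions are bookkeeping the paper leaves implicit (the reformulation of $n-r(n)$ via $\min d(G)$, and the monotonicity $\chi_{DP}(\J(H,s))=\chi(\J(H,s))$ for all $s\ge Z_{DP}(H)$), both of which you justify correctly.
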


We prove Theorem~\ref{theo:main} in Section~\ref{sec:ma} and Theorem~\ref{theo:lower} in Section~\ref{sec:low}. The derivation of Corollary~\ref{corl:NRW_DP} from Corollary~\ref{corl:Z_n} and Theorem~\ref{theo:lower} is straightforward; for completeness, we include it at the end of Section~\ref{sec:low}.
	
	\section{Proof of Theorem~\ref{theo:main}}\label{sec:ma}

	For a graph $G$ and a finite set $A$ disjoint from $V(G)$, let $\J(G, A)$ denote the graph with vertex set $V(G) \cup A$ obtained from $G$ be adding all edges with at least one endpoint in $A$ (i.e., $\J(G, A)$ is a concrete representative of the isomorphism type of $\J(G, |A|)$).

First we prove the following more technical version of Theorem~\ref{theo:main}:
	
		
		\begin{theo}\label{theo:upper_bound}
			Let $G$ be a $k$-colorable graph. Let $A$ be a finite set disjoint from $V(G)$ and let
			 $(L,H)$ be a cover of $\J(G, A)$ such that for all $a \in A$, $|L(a)| \geq |A| + k$. Suppose that \begin{equation}\label{eq:IH}
				|A| \,\geq\, \frac{3}{2}\sum_{v \in V(G)} \max \set{\deg_G(v) + |A| - |L(v)| + 1,\,0}.
			\end{equation}
			Then $\J(G,A)$ is $(L,H)$-colorable.
		\end{theo}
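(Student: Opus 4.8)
The plan is to induct on $|V(G)|$, keeping \eqref{eq:IH} as the invariant preserved by the reductions. If $V(G)=\0$ then $\J(G,A)=K_{|A|}$, which is $(L,H)$-colorable because $|L(a)|\geq|A|+k\geq|A|$. So assume $V(G)\neq\0$ and call $v\in V(G)$ \emph{slack} if $|L(v)|\geq\deg_{\J(G,A)}(v)+1=\deg_G(v)+|A|+1$, i.e.\ if the $v$-term of the sum in \eqref{eq:IH} vanishes. If some $v$ is slack, delete it: for $\J(G-v,A)$ the right-hand side of \eqref{eq:IH} does not grow (deleting $v$ only lowers the degrees of the remaining vertices and drops a zero term), $\chi(G-v)\leq k$, so the inductive hypothesis yields an $(L,H)$-coloring of $\J(G-v,A)$; since fewer than $|L(v)|$ colors of $L(v)$ are forbidden (at most one per neighbour of $v$ in $\J(G,A)$), this extends to $v$.

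So assume no vertex of $V(G)$ is slack, so every term of the sum in \eqref{eq:IH} is $\geq 1$; hence $|A|\geq\tfrac32|V(G)|$, and, writing $d(v)$ for the $v$-term, $|L(v)|=\deg_G(v)+|A|+1-d(v)\geq\deg_G(v)+1+\tfrac13|A|\geq\deg_G(v)+2$ for all $v$. Thus $G$ by itself is trivial to color and all the difficulty is compatibility with the clique $A$. If there exist $a^\ast\in A$ and $x\in L(a^\ast)$ such that $x$ has no neighbour (in $H$) in $L(v)$ for some $v\in V(G)$, I would color $a^\ast$ first by $x$ and recurse on $\J(G,A\setminus\set{a^\ast})$ with lists shrunk by the neighbours of $x$: the remaining $A$-lists keep size $\geq(|A|-1)+k$, and the sum $S:=\sum_v d(v)$ drops by at least $1$ (the $v$-term for the missed $v$ loses $1$ and it was positive), so since $\tfrac32(S-1)\leq\tfrac32 S-1\leq|A|-1$ the new instance still satisfies \eqref{eq:IH} with $|A|$ replaced by $|A|-1$; combining the recursively found coloring with $x$ finishes this case.

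Otherwise, for every $a\in A$ every color of $L(a)$ has a neighbour in every $L(v)$; since the edges between $L(a)$ and $L(v)$ form a matching this forces $|L(v)|\geq|L(a)|\geq|A|+k$ for all $v$ and $a$, and together with non-slackness it forces $\delta(G)\geq k$. Here I would color the clique $A$ first — always possible — but choose the colors $\set{x_a:a\in A}$ with care: while building this independent transversal greedily there are always at least $k+1$ choices for the next $x_a$ (each $|L(a)|\geq|A|+k$), and I would spend this freedom to make the neighbours of the $x_a$'s inside each $L(v)$ coincide often, so that the residual lists $L'(v):=L(v)\setminus N_H(\set{x_a:a\in A})$ still satisfy $|L'(v)|\geq\deg_G(v)+1$ for every $v\in V(G)$; then $G$ is $(L',H)$-colorable greedily and we are done. \textbf{The main obstacle is precisely this last step} — exhibiting a coloring of $A$ that simultaneously protects all of the lists $L(v)$, $v\in V(G)$ — and it is exactly here that the constant $\tfrac32$ in \eqref{eq:IH} (rather than a constant near $1$) is used: it furnishes the surplus of colors needed to force enough of these coincidences. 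I expect the bookkeeping for this, controlling for each $v$ the number of distinct colors of $L(v)$ hit by $\set{x_a}$ through a greedy or averaging argument, to be the technical heart of the proof.
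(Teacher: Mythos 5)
Your reductions up to the hard case are sound and coincide with the paper's own first steps: the treatment of slack vertices is the paper's Claim 1 and Corollary 2, and the deletion of a single $a^\ast\in A$ when some $x\in L(a^\ast)$ misses some $L(v)$ is the paper's Claim 3 (for both of these any factor greater than $1$ in \eqref{eq:IH} would do). But the endgame is missing, and you say so yourself: you never exhibit a transversal $\set{x_a : a\in A}$ that creates, inside each $L(v)$, at least $\sigma(v)\coloneqq \deg_G(v)+|A|-|L(v)|+1$ coincidences among the neighbours of the $x_a$'s, and no averaging or greedy argument for this is offered. At this point that claim \emph{is} the theorem, not bookkeeping, and your guess about where the constant $\tfrac32$ enters is not how the paper uses it.

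The paper's route through the hard case is different in two essential ways. First, it also inducts on $k$: since one can show $k\ge 2$, there are disjoint independent sets $U_0,U_1$ with $\chi(G-U_i)=k-1$, and colouring all of $U_i$ at once (each $u\in U_i$ receiving the unique neighbour in $L(u)$ of a single already-fixed $A$-colour) reduces to a $(k-1)$-colourable instance; the hypothesis $|L(a)|\ge|A|+k$ has exactly one unit of slack per unit of $k$ to absorb the extra $A$-colour spent in this reduction. Second, the decisive structural fact is a ``forbidden walk'' lemma: a minimal counterexample contains no walk $x_0-y_0-x_1-y_1-x_2$ with $\set{x_0,x_1,x_2}$ independent and contained in $A$-lists and $y_0\neq y_1$ in $G$-lists, because colouring the three corresponding elements of $A$ with $x_0,x_1,x_2$ decreases $|A|$ by $3$ while decreasing $\sigma$ by only $2$, and $\tfrac32(\sigma-2)\le|A|-3=|A'|$ --- this is where the constant $\tfrac32$ is actually spent. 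Two applications of the $k$-induction then manufacture such a walk, yielding the contradiction. Note that this lemma cuts directly against your plan: two coincidences overlapping in a common $A$-colour already produce the forbidden configuration, so in a minimal counterexample coincidences cannot be chained together, which makes it doubtful that the abundance of coincidences your final greedy step requires can be arranged at all.
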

		\begin{proof}
			For a graph $G$, a set $A$ disjoint from $V(G)$, a cover $(L, H)$ of $\J(G, A)$, and a vertex $v \in V(G)$, let
			$$
			\sigma(G, A, L, H, v) \coloneqq \max \set{\deg_G(v) + |A| - |L(v)| + 1,\,0}
			$$
			and
			$$
			\sigma(G, A, L, H) \coloneqq \sum_{v \in V(G)} \sigma(G, A, L, H, v).
			$$
			
			Assume, towards a contradiction, that a tuple $(k, G, A, L, H)$ forms a counterexample which minimizes $k$, then $|V(G)|$, and then $|A|$. For brevity, we will use the following shortcuts:
			$$
			\sigma(v) \coloneqq \sigma(G, A, L, H, v); \;\;\; \sigma \coloneqq \sigma(G, A, L, H).
			$$
			Thus,~\eqref{eq:IH} is equivalent to
			$$
				|A| \geq \frac{3\sigma}{2}.
			$$
			
			Note that $|V(G)|$ and $|A|$ are both positive. Indeed, if $V(G) = \0$, then $\J(G, A)$ is just a clique with vertex set $A$, so its DP-chromatic number is $|A|$. If, on the other hand, $A = \0$, then~\eqref{eq:IH} implies that $|L(v)| \geq \deg_G(v) + 1$ for all $v \in V(G)$, so an $(L, H)$-coloring of $G$ can be constructed greedily. Furthermore, $\chi(G) = k$, since otherwise we could have used the same $(G, A, L, H)$ with a smaller value of $k$.
			
			\begin{claim}\label{claim:critical}
				For every $v \in V(G)$, the graph $\J(G - v, A)$ is $(L,H)$-colorable.
			\end{claim}
			\begin{claimproof}
				Consider any $v_0 \in V(G)$ and let $G' \coloneqq G - v_0$. For all $v \in V(G')$, $\deg_{G'}(v) \leq \deg_G(v)$, and thus
				$\sigma(G', A, L, H, v) \leq \sigma(v)$. Therefore,
				$$
				\frac{3}{2}\sigma (G', A, L, H)\,\leq\,   {3\sigma \over 2}\,\leq \,|A|.
				$$
				By the minimality of $|V(G)|$, the conclusion of Theorem~\ref{theo:upper_bound} holds for $(k, G', A, L, H)$, i.e., $\J(G', A)$ is $(L,H)$-colorable, as claimed.
			\end{claimproof}
			
			\begin{smallcorl}\label{corl:small_lists}
				For every $v \in V(G)$,
				$$\sigma(v) = \deg_G(v) + |A| - |L(v)| + 1 > 0.$$
			\end{smallcorl}
			\begin{claimproof}
				Suppose that for some $v_0 \in V(G)$,
				$$
				\deg_G(v_0) + |A| - |L(v_0)| + 1 \leq 0,
				$$
				i.e.,
				$$
				|L(v_0)| \geq \deg_G(v_0) + |A| + 1.
				$$
				Using Claim~\ref{claim:critical}, fix any $(L,H)$-coloring $I$ of $\J(G - v_0, A)$. Since $v_0$ still has at least
				$$
				|L(v_0)| - (\deg_G(v_0) + |A|) \geq 1
				$$
				available colors, $I$ can be extended to an $(L,H)$-coloring of $\J(G, A)$ greedily; a contradiction.
			\end{claimproof}
			
			\begin{claim}\label{claim:no_neighbor}
				For every $v \in V(G)$ and $x \in \bigcup_{a \in A} L(a)$, there is $y \in L(v)$ such that $xy \in E(H)$.
			\end{claim}
			\begin{claimproof}
				Suppose that for some $a_0 \in A$, $x_0 \in L(a_0)$, and $v_0 \in V(G)$, we have $L(v_0) \cap N_H(x_0) = \0$. Let $A' \coloneqq A \setminus \set{a_0}$, and for every $w \in V(G) \cup A'$, let $L'(w) \coloneqq L(w) \setminus N_H(x_0)$. Note that for all $a \in A'$, $|L'(a)| \geq |A'| + k$, and for all $v \in V(G)$, $\sigma(G, A', L', H, v) \leq \sigma(v)$. Moreover, by the choice of~$x_0$, $|L'(v_0)| = |L(v_0)|$, which, due to Corollary~\ref{corl:small_lists}, yields
				$ \sigma(G, A', L', H, v_0) \leq \sigma(v_0)-1$.
				This implies $\sigma(G, A', L', H) \leq \sigma - 1$, and thus
				$$
				\frac{3}{2} \sigma(G, A', L', H) \leq {3(\sigma - 1) \over 2} \leq |A|-\frac{3}{2} < |A'|.
				$$
				By the minimality of $|A|$, the conclusion of Theorem~\ref{theo:upper_bound} holds for $(k, G, A', L', H)$, i.e., the graph $\J(G, A')$ is $(L', H)$-colorable. By the definition of~$L'$, for any $(L', H)$-coloring $I$ of $\J(G, A')$, $I \cup \set{x_0}$ is an $(L,H)$-coloring of $\J(G, A)$. This is a~contradiction.
			\end{claimproof}
			
			\begin{smallcorl}\label{corl:k_geq_2}
				$k \geq 2$.
			\end{smallcorl}
			\begin{claimproof}
				Let $v \in V(G)$ and consider any $a \in A$. Since, by Claim~\ref{claim:no_neighbor}, each $x \in L(a)$ has a neighbor in $L(v)$, we have
				$$
				|L(v)| \geq |L(a)| \geq |A| + k.
				$$
				Using Corollary~\ref{corl:small_lists}, we obtain
				$$
					0 \leq \deg_G(v) + |A| - |L(v)| \leq \deg_G(v) - k,
				$$
				i.e., $\deg_G(v) \geq k$. Since $V(G) \neq \0$, $k \geq 1$, which implies $\deg_G(v) \geq 1$. But then $\chi(G) \geq 2$, as desired.
			\end{claimproof}
			
			\begin{claim}\label{claim:walk}
				$H$ does not contain a walk of the form $x_0- y_0-x_1-y_1-x_2$, where
				\begin{itemize}
					\item $x_0$, $x_1$, $x_2 \in \bigcup_{a \in A} L(a)$;
					\item $y_0$, $y_1 \in \bigcup_{v \in V(G)} L(v)$;
					\item $x_0 \neq x_1\neq x_2$ and $y_0 \neq y_1$ (but it is possible that $x_0 = x_2$);
					\item the set $\set{x_0, x_1, x_2}$ is independent in $H$.
				\end{itemize}
			\end{claim}
			\begin{claimproof}
				Suppose that such a walk exists and let $a_0$, $a_1$, $a_2 \in A$ and $v_0$, $v_1 \in V(G)$ be such that $x_0 \in L(a_0)$, $y_0 \in L(v_0)$, $x_1 \in L(a_1)$, $y_1 \in L(v_1)$, and $x_2 \in L(a_2)$. Let $A' \coloneqq A \setminus \set{a_0, a_1, a_2}$, and for every $w \in V(G) \cup A'$, let $L'(w) \coloneqq L(w) \setminus N_H(x_0, x_1, x_2)$. Since $\set{x_0, x_1, x_2}$ is an independent set, for all $a \in A'$, $|L'(a)| \geq |A'| + k$, while for all $v \in V(G)$, $\sigma(G, A', L', H, v) \leq \sigma(v)$. Moreover, since for each $i \in \set{0,1}$, the set $\set{x_0, x_1, x_2}$ contains two distinct neighbors of $y_i$, we have
				$\sigma(G, A', L', H, v_i) \leq \sigma(v_i) - 1$.
				Therefore, $\sigma(G, A', L', H) \leq \sigma - 2$, and thus
				$$
					\frac{3}{2} \sigma(G, A', L', H) \leq \frac{3(\sigma - 2)}{2} \leq |A| - 3 \leq |A'|.
				$$
				By the minimality of $|A|$, the conclusion of Theorem~\ref{theo:upper_bound} holds for $(k, G, A', L', H)$, i.e., the graph $\J(G, A')$ is $(L', H)$-colorable. By the definition of~$L'$, for any $(L', H)$-coloring $I$ of $\J(G, A')$, $I \cup \set{x_0,x_1, x_2}$ is an $(L,H)$-coloring of $\J(G, A)$. This is a~contradiction.
			\end{claimproof}
			
			Due to Corollary~\ref{corl:k_geq_2}, we can choose a pair of disjoint independent sets $U_0$, $U_1 \subset V(G)$ such that $\chi(G - U_0) = \chi(G - U_1) = k-1$. Choose arbitrary elements $a_1 \in A$ and $x_1 \in L(a_1)$. By Claim~\ref{claim:no_neighbor}, for each $u \in U_0 \cup U_1$, there is a unique element $y(u) \in L(u)$ adjacent to $x_1$ in $H$ (the uniqueness of $y(u)$ follows from the definition of a cover). Let
			$$
				I_0 \coloneqq \set{y(u) \,:\, u \in U_0} \;\;\;\text{ and }\;\;\; I_1 \coloneqq \set{y(u) \,:\, u \in U_1}.
			$$
			Since $U_0$ and $U_1$ are independent sets in $G$, $I_0$ and $I_1$ are independent sets in $H$.
			
			\begin{claim}\label{claim:first_step}
				There exists an element $a_0 \in A \setminus \set{a_1}$ such that $L(a_0) \cap N_H(I_0) \not \subseteq N_H(x_1)$.
			\end{claim}
			\begin{claimproof}
				Assume that for all $a \in A \setminus \set{a_1}$, we have $L(a) \cap N_H(I_0) \subseteq N_H(x_1)$. Let $G' \coloneqq G - U_0$, and for each $w \in V(G') \cup A$, let $L'(w) \coloneqq L(w) \setminus N_H(I_0)$. By the definition of $I_0$, $L'(a_1) = L(a_1) \setminus \set{x_1}$, so $$|L'(a_1)| = |L(a_1)| - 1 \geq |A| + (k-1).$$
				On the other hand, by our assumption, for each $a \in A \setminus \set{a_1}$, we have $$|L'(a)| = |L(a) \setminus N_H(I_0)| \geq |L(a) \setminus N_H(x_1)| \geq |L(a)| - 1 \geq |A| + (k-1).$$
				Since for all $v \in V(G)$, $\sigma(G', A, L', H, v) \leq \sigma(v)$, the minimality of $k$ implies the conclusion of Theorem~\ref{theo:upper_bound} for $(k-1, G', A, L', H)$; in other words, the graph $\J(G', A)$ is $(L', H)$-colorable. By the definition of~$L'$, for any $(L', H)$-coloring $I$ of $\J(G', A)$, $I \cup I_0$ is an $(L,H)$-coloring of $\J(G, A)$; this is a~contradiction.
			\end{claimproof}
			
			Using Claim~\ref{claim:first_step}, fix some $a_0 \in A \setminus \set{a_1}$ satisfying $L(a_0) \cap N_H(I_0) \not\subseteq N_H(x_1)$, and choose any
			$$x_0 \in (L(a_0) \cap N_H(I_0)) \setminus N_H(x_1).$$
			Since $x_0 \in N_H(I_0)$, we can also choose $y_0 \in I_0$ so that $x_0 y_0 \in E(H)$.
			
			\begin{claim}
				$x_0 \not \in N_H(I_1)$.
			\end{claim}
			\begin{claimproof}
				If there is $y_1 \in I_1$ such that $x_0 y_1 \in E(H)$, then $x_0 - y_0 - x_1 - y_1 - x_0$ is a walk in $H$ whose existence is ruled out by Claim~\ref{claim:walk}.
			\end{claimproof}
			
			\begin{claim}
				There is an element $a_2 \in A \setminus \set{a_0, a_1}$ such that $L(a_2) \cap N_H(I_1) \not \subseteq N_H(x_0, x_1)$.
			\end{claim}
			\begin{claimproof}
				The proof is almost identical to the proof of Claim~\ref{claim:first_step}. Assume that for all $a \in A \setminus \set{a_0, a_1}$, we have $L(a) \cap N_H(I_1) \subseteq N_H(x_0, x_1)$. Let $G' \coloneqq G - U_1$, $A' \coloneqq A \setminus \set{a_0}$, and for each $w \in V(G') \cup A'$, let $L'(w) \coloneqq L(w) \setminus N_H(\set{x_0} \cup I_1)$. By the definition of $I_1$, $L(a_1) \cap N_H(I_1) = \set{x_1}$, so $$|L'(a_1)| \geq |L(a_1)| - 2 \geq |A| + k-2 = |A'| + (k-1).$$
				On the other hand, by our assumption, for each $a \in A \setminus \set{a_0, a_1}$, we have $$|L'(a)| \geq |L(a) \setminus N_H(x_0, x_1)| \geq |L(a)| - 2 \geq |A| + k-2 = |A'| + (k-1).$$
				Since for all $v \in V(G)$, $\sigma(G', A', L', H, v) \leq \sigma(v)$, the minimality of $k$ implies the conclusion of Theorem~\ref{theo:upper_bound} for $(k-1, G', A', L', H)$; in other words, the graph $\J(G', A')$ is $(L', H)$-colorable. By the definition of~$L'$, for any $(L', H)$-coloring $I$ of $\J(G', A)$, $I \cup \set{x_0} \cup I_1$ is an $(L,H)$-coloring of $\J(G, A)$. This is a~contradiction.
			\end{claimproof}
			
			Now we are ready to finish the proof of Theorem~\ref{theo:upper_bound}. Fix some $a_2 \in A \setminus \set{a_0, a_1}$ satisfying $L(a_2) \cap N_H(I_1) \not\subseteq N_H(x_0, x_1)$, and choose any
			$$
				x_2 \in (L(a_2) \cap N_H(I_1)) \setminus N_H(x_0, x_1).
			$$
			Since $x_2 \in N_H(I_1)$, there is $y_1 \in I_1$ such that $x_2 y_1 \in E(H)$. Then $x_0-y_0-x_1-y_1-x_2$ is a walk in $H$ contradicting the conclusion of Claim~\ref{claim:walk}.
		\end{proof}
		
	Now it is easy to derive Theorem~\ref{theo:main}. Indeed, let $G$ be a graph with $n$ vertices, $m$ edges, and chromatic number $k$, let $A$ be a finite set disjoint from $V(G)$, and let $(L, H)$ be a cover of $\J(G, A)$ such that for all $v \in V(G)$ and $a \in A$, $|L(v)| = |L(a)| = \chi(\J(G,A)) = |A| + k$. Note that
	$$
		\frac{3}{2} \sum_{v \in V(G)} \max \set{\deg_G(v) - |L(v)| + |A| + 1,\,0} = \frac{3}{2} \sum_{v \in V(G)} \max \set{\deg_G(v) - k + 1, \, 0}.
	$$
	If $|A| \geq 3m$, then
	$$
		\frac{3}{2} \sum_{v \in V(G)} \max \set{\deg_G(v) - k + 1, \, 0} \leq \frac{3}{2} \sum_{v \in V(G)} \deg_G(v) = 3m \leq |A|,
	$$
	so Theorem~\ref{theo:upper_bound} implies that $\J(G, A)$ is $(L, H)$-colorable, and hence $Z_{DP}(G) \leq 3m$. Moreover, if $\delta(G) \geq k-1$, then
	$$
		\frac{3}{2} \sum_{v \in V(G)} \max \set{\deg_G(v) - k + 1, \, 0}  = \frac{3}{2} \sum_{v \in V(G)} (\deg_G(v) - k + 1)  = 3m - \frac{3}{2}(k-1)n,
	$$
	so $Z_{DP}(G) \leq 3m - \frac{3}{2}(k-1)n$, as desired. Finally, Corollary~\ref{corl:Z_n} follows from Theorem~\ref{theo:main} and the fact that an $n$-vertex graph can have at most ${n \choose 2} \leq n^2/2$ edges.
	
\section{Proof of Theorem~\ref{theo:lower}}\label{sec:low}
We will prove the following precise version of Theorem~\ref{theo:lower}:

\begin{theo}\label{theo:lower2}
			For all even $n \in \N$,
			$Z_{DP}(n) \geq n^2/4-n$.	
		\end{theo}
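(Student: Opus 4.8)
The plan is to prove the statement using the single graph $G = K_{t,t}$, where $n = 2t$. Since $\chi(K_{t,t}) = 2$, $n^2/4 - n = t^2 - 2t$, and $Z_{DP}(n) \geq Z_{DP}(K_{t,t})$, it will suffice to show $Z_{DP}(K_{t,t}) \geq t^2 - 2t$. By the definition of $Z_{DP}$, this in turn reduces to the following: for every integer $s$ with $0 \leq s \leq t^2 - 2t - 1$, construct a cover $(L,H)$ of $\J(K_{t,t}, A)$ with $|A| = s$ such that $|L(w)| = s+2$ for every vertex $w$ but $\J(K_{t,t},A)$ has no $(L,H)$-coloring. Since $\chi(\J(K_{t,t},s)) = s+2$, such a cover shows $\chi_{DP}(\J(K_{t,t},s)) > \chi(\J(K_{t,t},s))$ for all these $s$, and hence $Z_{DP}(K_{t,t}) \geq t^2 - 2t$.

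I would build the cover as follows. Write $V(K_{t,t}) = \{x_1,\dots,x_t\}\cup\{y_1,\dots,y_t\}$, $A = \{a_1,\dots,a_s\}$, set $m := s+2$, fix a colour set $[m] := \{1,\dots,m\}$, let $L(w) := \{(w,c) : c \in [m]\}$ for each vertex $w$, and make every $H[L(w)]$ complete. Between $L(a_i)$ and $L(a_j)$, between $L(x_i)$ and $L(a_j)$, between $L(y_i)$ and $L(a_j)$, and also between $L(x_1)$ and $L(y_j)$ and between $L(x_i)$ and $L(y_1)$, I would take the \emph{identity matching} $\{(w,c)(w',c) : c \in [m]\}$ (for all relevant $i,j$). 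For the remaining pairs $L(x_i), L(y_j)$ with $i,j \geq 2$: since $m = s+2 \leq (t-1)^2$, the chromatic index of the complete graph $K_m$ on vertex set $[m]$ satisfies $\chi'(K_m) \leq m \leq (t-1)^2$, so $E(K_m)$ can be partitioned into $(t-1)^2$ (possibly empty) matchings $F_1,\dots,F_{(t-1)^2}$; after fixing a bijection $\theta\colon\{2,\dots,t\}^2 \to \{1,\dots,(t-1)^2\}$, I would put between $L(x_i)$ and $L(y_j)$ the matching $\{(x_i,c)(y_j,c') : \{c,c'\}\in F_{\theta(i,j)}\}$ (which is indeed a matching in $H$ because $F_{\theta(i,j)}$ is one). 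It is routine to check that $(L,H)$ is a cover with all lists of size $m = s+2$.

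To rule out an $(L,H)$-coloring $I$, I would argue as follows. Since the $a_i$'s form a clique joined by identity matchings, $I$ meets each $L(a_j)$ in $\{(a_j,\alpha(a_j))\}$ for some injection $\alpha\colon A \to [m]$; set $\{p,q\} := [m]\setminus\alpha(A)$. The identity matchings between $L(x_i)$ and the $L(a_j)$'s forbid $(x_i,c)$ for every $c \in \alpha(A)$, so $I$ selects $(x_i,c_i)$ with $c_i\in\{p,q\}$, and likewise $(y_j,d_j)$ with $d_j\in\{p,q\}$. Now $\{p,q\}$ is an edge of $K_m$, hence lies in a unique $F_\ell$; write $\ell = \theta(b,d)$ with $b,d\geq 2$. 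The matching between $L(x_b)$ and $L(y_d)$ contains the edges $(x_b,p)(y_d,q)$ and $(x_b,q)(y_d,p)$, which forces $c_b = d_d$; meanwhile the identity matchings at positions $(1,1)$, $(1,d)$, $(b,1)$ give $c_1 \neq d_1$, $c_1 \neq d_d$, $c_b \neq d_1$, all inside the $2$-element set $\{p,q\}$. The first two relations force $d_1 = d_d$, and then $c_b = d_d = d_1 \neq c_b$, a contradiction. Hence $\J(K_{t,t},A)$ has no $(L,H)$-coloring.

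I expect the delicate point to be the numerology rather than any individual step. The ``active'' part of the cover lives on the $(t-1)^2$ positions $(i,j)$ with $i,j\geq 2$: the matching decomposition makes each of them behave, for exactly one colour pair $\{p,q\}$, like a ``swap'' between the $x$-colours and the $y$-colours, while the first row and first column always supply the three ``identity'' edges needed to close up a $K_{2,2}$-type obstruction for that pair. Covering all $\binom{m}{2}$ pairs this way requires $\chi'(K_m) \leq (t-1)^2$, that is $m = s+2 \leq (t-1)^2$, that is $s \leq t^2 - 2t - 1$ --- and this is exactly the inequality that produces the bound $n^2/4 - n$. Making this count fit on the nose is the heart of the argument.
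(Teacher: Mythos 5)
Your proof is correct, and it targets the same extremal example as the paper ($K_{t,t}$ joined with a clique, identity matchings from the $A$-lists cutting the residual palette on $X\cup Y$ down to a two-element set $\set{p,q}$, and then a family of ``twisted'' matchings on the $X$--$Y$ edges arranged so that every possible residual pair produces a conflict). The implementation of that last gadget, however, is genuinely different. The paper indexes colors by $\Z_k\times\Z_k$ and puts the shift matching $(x_s,i,j)\sim(y_t,i+s,j+t)$ on the edge $x_sy_t$; two special vertices $x,y$ with identity matchings force all of $X$ to a single common value and all of $Y$ to a single common value, after which exactly one shift kills the resulting ordered pair. You instead take a proper edge colouring of $K_m$ into $(t-1)^2$ matchings, assign one matching (as a ``swap'' $p\leftrightarrow q$) to each position $(i,j)$ with $i,j\geq 2$, and close the contradiction locally on the four positions $(1,1),(1,d),(b,1),(b,d)$ without ever needing the $X$-values (or $Y$-values) to agree globally; the count $\chi'(K_m)\leq m\leq (t-1)^2$ plays the role that the group structure of $\Z_k\times\Z_k$ plays in the paper, and it lands on exactly the same threshold $s\leq t^2-2t-1$. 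Your version also has the minor advantage of exhibiting a bad cover for \emph{every} $s\leq t^2-2t-1$ rather than only the extremal one, which sidesteps the (true, but unstated in the paper) monotonicity of the property $\chi_{DP}(\J(G,s))=\chi(\J(G,s))$ in $s$ that is implicitly used to conclude $Z_{DP}(G)\geq k^2-1$ from non-colorability at $s=k^2-2$. The only cosmetic omission is the trivial range $t\leq 2$, where $t^2-2t\leq 0$ and the statement is vacuous.
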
			
		\begin{proof} Let $n\in \N$ be even and let $k \coloneqq n/2-1$. Note that $n^2/4 - n = k^2 - 1$. Thus, it is enough to exhibit an $n$-vertex bipartite graph $G$ and a cover $(L, H)$ of $\J(G, k^2 - 2)$ such that $|L(u)| = k^2$ for all $u \in V(\J(G, k^2 - 2))$, yet $\J(G, k^2 - 2)$ is not $(L, H)$-colorable.
			 
		 Let $G \cong K_{n/2, n/2}$ be an $n$-vertex complete bipartite graph with parts $X = \set{x, x_0, \ldots, x_{k-1}}$ and $Y = \set{y, y_0, \ldots, y_{k-1}}$, where the indices $0$, \ldots, $k-1$ are viewed as elements of the additive group $\Z_k$ of integers modulo $k$. Let $A$ be a set of size $k^2 - 2$ disjoint from $X \cup Y$. For each $u \in X \cup Y \cup A$, let $L(u) \coloneqq \set{u} \times \Z_k \times \Z_k$. Let $H$ be the graph with vertex set $(X \cup Y \cup A) \times \Z_k \times \Z_k$ in which the following pairs of vertices are adjacent:
		 
		 \begin{itemize}
		 	\item[--] $(u, i, j)$ and $(u, i', j')$ for all $u \in X \cup Y \cup A$ and $i$, $j$, $i'$, $j' \in \Z_k$ such that $(i, j) \neq (i', j')$;
		 	\item[--] $(u, i, j)$ and $(v, i, j)$ for all $u \in \set{x, y} \cup A$, $v \in N_{\J(G, A)}(u)$, and $i$, $j \in \Z_k$;
		 	\item[--] $(x_s, i, j)$ and $(y_t, i+s, j+t)$ for all $s$, $t$, $i$, $j \in \Z_k$.
		 \end{itemize}
			
			It is easy to see that $(L, H)$ is a cover of $\J(G, A)$. We claim that $\J(G, A)$ is not $(L, H)$-colorable. Indeed, suppose that $I$ is an $(L, H)$-coloring of $\J(G, A)$. For each $u \in X \cup Y \cup A$, let $i(u)$ and $j(u)$ be the unique elements of $\Z_k$ such that $(u, i(u), j(u)) \in I$. By the construction of $H$ and since $I$ is an independent set, we have
			$$
				(i(u), j(u)) \neq (i(a), j(a))
			$$
			for all $u \in X \cup Y$ and $a \in A$. Since all the $k^2 - 2$ pairs $(i(a), j(a))$ for $a \in A$ are pairwise distinct, $(i(u), j(u))$ can take at most $2$ distinct values as $u$ is ranging over $X \cup Y$. One of those $2$ values is $(i(y), j(y))$, and if $u \in X$, then
			$$
				(i(u), j(u)) \neq (i(y), j(y)),
			$$
			so the value of $(i(u), j(u))$ must be the same for all $u \in X$; let us denote it by $(i, j)$. Similarly, the value of $(i(u), j(u))$ is the same for all $u \in Y$, and we denote it by $(i', j')$.
			
			It remains to notice that the vertices $(x_{i' - i}, i, j)$  and $(y_{j' - j}, i', j')$ are adjacent in $H$, so $I$ is not an independent set.
		\end{proof}
		
		Now we can prove Corollary~\ref{corl:NRW_DP}:
		
		\begin{proof}[Proof of Corollary~\ref{corl:NRW_DP}]
			First, suppose that $G$ is an $n$-vertex graph with $\chi(G) = r$ that maximizes the difference $\chi_{DP}(G) - \chi(G)$. Adding edges to $G$ if necessary, we may arrange $G$ to be a complete $r$-partite graph. Assuming $2r > n$, at least $2r - n$ of the parts must be of size $1$, i.e., $G$ is of the form $\J(G', 2r-n)$ for some $2(n-r)$-vertex graph $G'$. By Corollary~\ref{corl:Z_n}, we have $\chi_{DP}(G) = \chi(G)$ as long as $2r - n \geq 6(n-r)^2$, which holds for all $r \geq n - (1/\sqrt{6}-o(1))\sqrt{n}$. This establishes the upper bound $r(n) \leq n - \Omega(\sqrt{n})$.
			
			On the other hand, due to Theorem~\ref{theo:lower}, for each $n$, we can find a graph $G$ with $s$ vertices, where $s \leq (2+o(1))\sqrt{n}$, such that $\chi_{DP}(\J(G, n - s)) > \chi(\J(G, n - s))$. Since $\J(G, n - s)$ is an $n$-vertex graph, we get
			$$
				r(n) > \chi(\J(G, n - s)) = \chi(G) + n - s \geq n - (2+o(1))\sqrt{n} = n - O(\sqrt{n}). \qedhere
			$$
		\end{proof}
		
		\paragraph{Acknowledgements.} The authors are grateful to the anonymous referees for their valuable comments and suggestions.

\end{document}